\title{\LARGE \bf
Optimally Linearizing Power Flow Equations for Improved \\Power System Dispatch
}
\author{Yuhao Chen and Manish K. Singh
\thanks{*This work was partially supported by the U.S. NSF under grant 2453151.}
\thanks{Yuhao Chen and Manish K. Singh are with the Department of Electrical and Computer Engineering, University of Wisconsin--Madison, Madison, Wisconsin, USA (email: {\tt\footnotesize \{yuhao.chen,manish.singh\}@wisc.edu}).
        }%
}
\DeclareMathOperator{\diag}{dg}
\DeclareMathOperator{\dist}{dist}
\newtheorem{assumption}{Assumption}
\newtheorem{proposition}{Proposition}
\newcommand \bzero{\mathbf{0}}
\newcommand \bone{\mathbf{1}}
\newcommand \bb{\mathbf{b}}
\newcommand \bd{\mathbf{d}}
\newcommand \be{\mathbf{e}}
\newcommand \bp{\mathbf{p}}
\newcommand \bq{\mathbf{q}}
\newcommand \br{\mathbf{r}}
\newcommand \bu{\mathbf{u}}
\newcommand \bv{\mathbf{v}}
\newcommand \bx{\mathbf{x}}
\newcommand \by{\mathbf{y}}
\newcommand \bz{\mathbf{z}}
\newcommand \bA{\mathbf{A}}
\newcommand \bB{\mathbf{B}}
\newcommand \bC{\mathbf{C}}
\newcommand \bF{\mathbf{F}}
\newcommand \bG{\mathbf{G}}
\newcommand \bI{\mathbf{I}}
\newcommand \bJ{\mathbf{J}}
\newcommand \bM{\mathbf{M}}
\newcommand \bP{\mathbf{P}}
\newcommand \bR{\mathbf{R}}
\newcommand \bS{\mathbf{S}}
\newcommand \bT{\mathbf{T}}
\newcommand \bW{\mathbf{W}}
\newcommand \bY{\mathbf{Y}}
\newcommand \balpha{\boldsymbol{\alpha}}
\newcommand \bgamma{\boldsymbol{\gamma}}
\newcommand \bdelta{\boldsymbol{\delta}}
\newcommand \btheta{\boldsymbol{\theta}}
\newcommand \blambda{\boldsymbol{\lambda}}
\newcommand \bmu{\boldsymbol{\mu}}
\newcommand \bpi{\boldsymbol{\pi}}
\newcommand \bphi{\boldsymbol{\phi}}
\newcommand \bchi{\boldsymbol{\chi}}
\newcommand \bGamma{\mathbf{\Gamma}}
\newcommand \bPi{\mathbf{\Pi}}
\newcommand \bPsi{\mathbf{\Psi}}
\newcommand \mrd{\mathrm{d}}
\newcommand \mrg{\mathrm{g}}
\newcommand \mrf{\mathrm{f}}
\newcommand \mrAC{\mathrm{AC}}
\newcommand \mrDC{\mathrm{DC}}
\newcommand \mcA{\mathcal{A}}
\newcommand \mcB{\mathcal{B}}
\newcommand \mcE{\mathcal{E}}
\newcommand \mcN{\mathcal{N}}
\newcommand \mcU{\mathcal{U}}
\newcommand \mcZ{\mathcal{Z}}
\begin{document}

\maketitle
\thispagestyle{empty}
\pagestyle{empty}

\begin{abstract}
Managing power grids with the increasing presence of variable renewable energy-based (distributed) generation involves solving high-dimensional optimization tasks at short intervals. Linearizing the AC power flow (PF) constraints is a standard practice to ease the computational burden at the cost of hopefully acceptable inaccuracies. However, the design of these PF linearizations has traditionally been agnostic of the use case. Towards bridging the linearization-application gap, we first model the complete operational sequence needed to implement optimal power flow (OPF) decisions on power systems and characterize the effect of PF linearization on the resulting steady-state system operation. We then propose a novel formulation for obtaining optimal PF constraint linearizations to harness desirable system-operation attributes such as low generation cost and engineering-limit violations. To pursue the optimal PF linearization, we develop a gradient-based approach backed by sensitivity analysis of optimization routines and AC PF equations. Numerical tests on the IEEE 39-bus system demonstrate the capabilities of our approach in traversing the cost-optimality vs operational feasibility trade-off inherent to OPF approximations.

\end{abstract}

\section{INTRODUCTION}
Power system operation involves a gamut of decision-making tasks with timelines varying from minutes to decades. Gloriously placed at the center of these problems are the AC power flow (PF) equations that relate the complex-valued power injections to bus voltages given the power network topology and impedances. The decision-making tasks are thus often referred to as AC optimal power flow (OPF). The PF equations form a nonlinear algebraic system that renders AC OPF tasks intractable for large systems. The proliferation of highly variable resources increases the problem dimensions and poses stricter time budgets for solvers, thus further exacerbating the computational challenges. The past two decades have featured a splendid body of research on alleviating these computational challenges through convex relaxations, restrictions, and linear approximations; see~\cite{Molzahn-Hiskens-PFSurvey} for an overview.

One prominent approach to obtaining numerical tractability is to approximate AC OPF by linearizing the PF constraints. Most linearization approaches belong to two main categories. Approaches in the first category rely on engineering insights (such as low line resistances, small angle deviations, and nominal voltage magnitudes) to drop non-linear terms in PF equations~\cite{Stott2009DCPFrevisited,BaranWu89CapPlacement,BernsteinCDCgeneralizedLDF}. The second category uses the first-order Taylor approximation for a fixed operating point~\cite{Siaraj2015PFLinearization,Bolognani2015implicit}. We refer readers to \cite[Ch. 5]{Molzahn-Hiskens-PFSurvey} for a rich survey on PF linearizations. Linearization quality is typically assessed based on the error between the actual power injections derived by the exact AC PF model and the power injections computed by the linearized one. One can statistically analyze these errors based on historical, predicted, or randomly generated operating points. Thus, recent works have parted from the traditional emphasis on a single linearization point to design model-based and data-based PF linearizations that accurately approximate AC PF over a distribution of operating conditions~\cite{Taheri-Molzahn-TPEC24,taheri2024optimized,Jiaqi22TSG_PWL}.

Interestingly, most existing evaluation and design approaches for PF linearization do not take the end-use (e.g., simplifying OPF) into consideration. Acknowledging that the merit of a PF linearization depends on how well it serves the downstream applications, reference~\cite{Baker-Kar-22GM} numerically compared seven PF linearization techniques when used to simplify OPF. It reports that the choice of linearization significantly impacts the optimality and feasibility of approximated minimizers. Among the rare efforts towards designing PF linearization for use in a specific OPF setting, Reference~\cite{LygerosTPWRS19_OptLin} develops semidefinite programs (SDP) to optimally find a linearization point that minimizes the expected AC OPF constraint violations. However, the developed approach offers limited flexibility and is computationally restricted by the abilities of SDP solvers. Recently, a more generalizable framework was put forth in~\cite{taheri2024ac}, where the linearization coefficients are optimally determined to reduce the Euclidean norm of the difference between the minimizers of AC OPF and that of the approximated DC OPF. Such a framework involves a supervised-learning-type approach that requires true AC OPF minimizers for a training set and uses bilevel optimization to tune linearization coefficients. However, we observe that the specific desired qualities from the minimizer of an approximated OPF are cost optimality and AC PF feasibility. These aspects are not necessarily captured by the Euclidean norm of the difference from AC OPF's minimizer. In this work, we show that one can directly optimize for the desired attributes, thereby eliminating the need to solve AC OPF to build the training set.

In this work, we develop an approach for optimal PF linearization while explicitly modeling the impact of linearization coefficients on the steady-state system operation. Therefore, our model subsumes the effect of postprocessing schemes and grid controls that are necessary for implementing the decisions obtained from DC OPF; see Fig.~\ref{fig:overview}. The novel contributions of this work include: \emph{i)} Developing a model that characterizes the impact of PF linearization coefficients on the steady-state operation of power systems; \emph{ii)} Formulating a novel application-informed optimal PF linearization problem; \emph{iii)} Conducting sensitivity analysis for individual subsystems in the grid operation architecture of Fig.~\ref{fig:overview}; and \emph{iv)} Presenting an analysis of the trade-off between operational feasibility and cost optimality that can be navigated by adjusting a weight parameter in the proposed formulation. The manuscript is organized as follows: Section~\ref{sec:overview} introduces the problem setup and the proposed solution approach with minimal notational overhang; the detailed modeling of DC OPF and steady-state operation of the AC power system is provided in Section~\ref{sec:model}; the sensitivity analysis that serves as the bedrock of our gradient-based approach is presented in Section~\ref{sec:SA}; numerical tests and empirical analysis is presented in Section~\ref{sec:tests} before providing concluding remarks.

\emph{Notation:} Lower- (upper-) case boldface letters denote column vectors (matrices). For a vector $\bx$, its $n$-th entry is denoted as $x_n$. The $(i,j)$-th entry of a matrix $\bA$ is represented by $A_{ij}$. The symbol $^\top$ stands for transposition, and inequalities are understood element-wise. A vector of all ones is denoted by $\bone$; a vector/matrix of all zeros is represented as $\bzero$. The identity matrix is denoted as $\bI$, and $\be_n$ is the $n$-th canonical vector. The dimensions for $(\bone, \bzero, \bI, \be_n)$ should be clear from the context. The operator $|\cdot|$ yields: the absolute value for real-valued arguments; and the cardinality when the argument is a set. Complex quantities are denoted using $\jmath:=\sqrt{-1}$. Operator $\diag()$ yields a diagonal matrix by placing its vector argument as the main diagonal. The indicator function is denoted by $\mathds{1}(\cdot)$ and acts element-wise. Gradients are represented using the numerator-layout; thus, for vectors $\bx$ and $\by$, the $(i,j)$-th entry of $\nabla_{\bx}\by$ is $\partial y_i/\partial x_j$.  

\section{OVERVIEW OF THE PROBLEM SETUP AND THE PROPOSED SOLUTION}\label{sec:overview}
\subsection{Problem Setup}
Consider the task of determining least-cost generation dispatch in a bulk power system with known complex power demand $\bp^\mrd+\jmath\bq^\mrd$. A power generation vector $\bp^\mrg+\jmath\bq^\mrg$ is admissible only if it allows the governing steady-state AC PF equations to have a solution. Additionally, the resulting operating point (network voltages and line power flows) must satisfy engineering and regulatory limits of the physical infrastructure. Abstractly representing an operating point as $\bpi$, and the feasible operating set  as $\bPi$, the generation dispatch AC OPF task involves solving
\begin{align}
\min_{\bp^\mrg,\bq^\mrg,\bpi\in\bPi}~&c(\bp^\mrg)~\notag\\
\textrm{such that,}~&~(\bp^\mrd,\bq^\mrd,\bp^\mrg,\bq^\mrg,\bpi)~\text{satisfy AC PF}.\notag
\end{align}
The generation cost is typically a convex quadratic function of only the active power generation. However, the nonconvexity of the set of AC PF-feasible generation renders the above task nonconvex. To avoid the computational burden for large power systems, linear power-flow approximations and additional optimization model simplifications are often called upon. One such frequently encountered simplification is referred to as the DC Optimal Power Flow, which results in a mapping ${\verb|DCOPF|}:\bp^\mrd\rightarrow\bp^\mrg$. The mapping implicitly depends on the coefficients of the linearized PF equations that replace AC PF constraints while formulating DC OPF; see Fig.~\ref{fig:overview}. Let $\bPsi$ be a vectorized collection of the linearization coefficients. The DC OPF output thus depends on $\bPsi$, i.e., $\bp^\mrg_{\mrDC}={\verb|DCOPF|}(\bp^\mrd;\bPsi)$.

   \begin{figure}[t]
      \centering
      \includegraphics[scale=1]{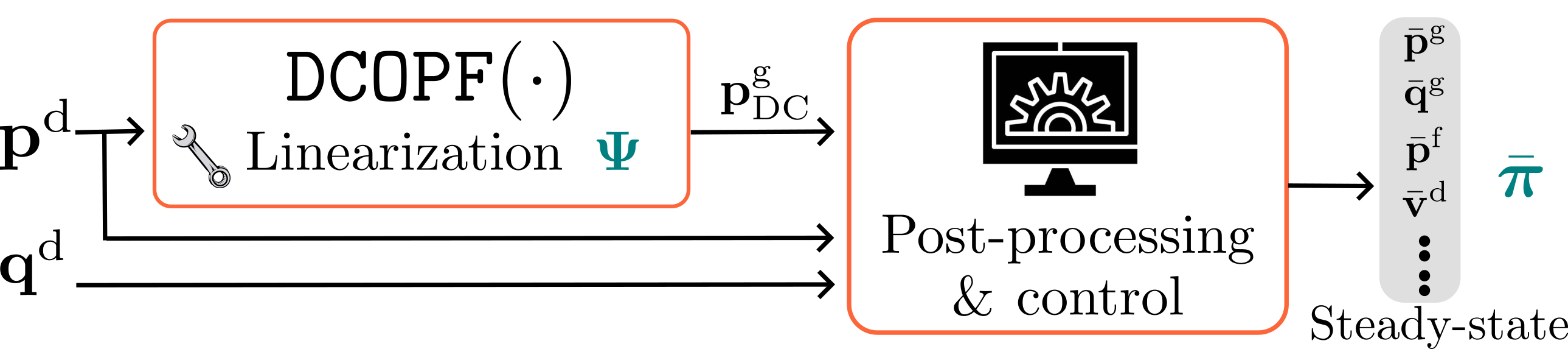}
      \caption{Schematic of the considered power system operational pipeline. This work characterizes the dependence of steady-state operating point $\bar{\bpi}$ on the PF linearization coefficients $\bPsi$ that are used to simplify OPF formulation. The main contribution of this work is to develop an approach that optimizes $\bPsi$ to improve the cost-optimality and operational feasibility of $\bar{\bpi}$.}\vspace{-1.5em}
      \label{fig:overview}
   \end{figure}

While the convexity of DC OPF makes it popular, it comes with obvious limitations. For instance, it is incognizant of the reactive power demand $\bq^\mrd$ and does not output $\bq^\mrg$. To make the situation worse, there may not exist a $\bq^\mrg$ such that $(\bp^\mrd,\bq^\mrd,\bp^\mrg_\mrDC,\bq^\mrg)$ admits an AC PF solution. Furthermore, even if the solution exists, the resulting operating point $\bpi$ may not be in the feasible set $\bPi$, for instance, there may be a line flow limit violation. To overcome the first challenge of AC PF infeasibility of $\bp^\mrg_\mrDC$, system operators often have post-processing heuristics and grid control schemes that map $\bp^\mrg_\mrDC\mapsto (\bar{\bp}^\mrg,\bar{\bq}^\mrg)$, such that $(\bp^\mrd,\bq^\mrd,\bar{\bp}^\mrg,\bar{\bq}^\mrg)$ admits an AC PF solution. The resulting operating point $\bar{\bpi}$ may still violate the engineering limits, implying $\bar{\bpi}\notin\bPi$. Additional operational practices and control schemes may be deployed to restore operational feasibility in such cases. The aforementioned post-processing steps inevitably compromise the cost optimality of the generation dispatch. In this work, we ascribe cost optimality and feasibility of the steady-state operation as two desired qualities for a DC OPF model. Accounting for the dependence of DC OPF on PF linearization coefficients $\bPsi$, we seek to solve 
\begin{equation}\label{eq:bilevelE}
  \min_{\bPsi} \mathbb{E}_{(\bp^\mrd,\bq^\mrd)} c(\bar{\bp}^\mrg)+w\dist(\bar{\bpi},\bPi),
\end{equation}
where $w$ is a scalar parameter balancing the two objectives and $\dist()$ is a distance metric quantifying the extent of violations of the engineering limits. Figure~\ref{fig:overview} provides an overview of the problem setup. In formulating~\eqref{eq:bilevelE}, we assume the mapping $\bp^\mrg_\mrDC\mapsto (\bar{\bp}^\mrg,\bar{\bq}^\mrg)$ to be fixed and known. Section~\ref{sec:ACPF} describes one such practical mapping. We next briefly discuss the solution approach to tackle~\eqref{eq:bilevelE}.

\subsection{Proposed Solution Approach}
Power-system operators typically have empirical knowledge of demand distribution through scenarios. Therefore, using the sample mean approximation in~\eqref{eq:bilevelE}, we recast the task of optimal linearization as 
\begin{equation}\label{eq:bilevel}
  \min_{\bPsi} \sum_{s=1}^S c(\bar{\bp}_s^\mrg)+w\dist(\bar{\bpi}_s,\bPi),
\end{equation}
where $S$ demand scenarios $\{\bp_s^\mrd,\bq_s^\mrd\}_{s=1}^S$ are used to evaluate the optimal linearization coefficients. For each $(\bp_s^\mrd,\bq_s^\mrd)$, the corresponding $\bar{\bp}^\mrg_s$ and $\bar{\bpi}_s$ are obtained as shown in Fig.~\ref{fig:overview}. Computing $\bp^\mrg_{\mrDC}$ requires solving the DC OPF problem, making~\eqref{eq:bilevel} a bilevel optimization task. The lower-level problem obtains the linearized optimal power flow solutions $\bp^\mrg_{\mrDC}$ for each demand scenario. The upper-level problem includes the post-processing procedure based on the lower-level solution, and minimizes the cost \eqref{eq:bilevel}. The detailed formulation of the two levels will be discussed in section~\ref{sec:model}. 

For related bilevel problem settings, gradient-based methods have recently yielded promising performance~\cite{taheri2024ac}. Inspired by these findings, we develop a gradient-descent solver for~\eqref{eq:bilevel}. To that end, computing the gradients $\nabla_{\bPsi}c(\cdot)$ and $\nabla_{\bPsi}\dist(\cdot)$ is required. Back-propagating through the workflow of Fig.~\ref{fig:overview}, we subsequently compute \emph{i)} Gradient of the cost function~\eqref{eq:bilevelE} with respect to $\bar{\bp}^g$ and $\bar{\bpi}$; \emph{ii)} Jacobians $\nabla_{\bp^\mrg_\mrDC}\bar{\bp}^\mrg$ and $\nabla_{\bp^\mrg_\mrDC}\bar{\bpi}$; and finally, \emph{iii)} the DC OPF sensitivities $\nabla_{\bPsi}\bp^\mrg_\mrDC$. The second and third steps are accomplished as follows:
\begin{itemize}
    \item In Section~\ref{sec:ACPF}, we characterize the mapping $\bp^\mrg_\mrDC\mapsto (\bar{\bp}^\mrg,\bar{\bpi})$ using a distributed-slack-based AC power flow solver that succinctly captures power-system operation under the widely deployed hierarchical grid control structure~\cite{Dhople20Slack}. This allows us, in Section~\ref{sec:SCACPF}, to compute $\nabla_{\bp^\mrg_\mrDC}\bar{\bp}^\mrg$ and $\nabla_{\bp^\mrg_\mrDC}\bar{\bpi}$ using implicit differentiation of the adopted AC power flow equations.
    \item In Section~\ref{sec:DCOPF}, we instantiate the DC OPF formulation parameterized by the linearization coefficients $\bPsi$. Computing $\nabla_{\bPsi}\bp^\mrg_\mrDC$ requires conducting a sensitivity analysis. Specifically, under certain conditions identified in Section~\ref{sec:SADCOPF}, one can use implicit differentiation of the Karush–Kuhn–Tucker (KKT) conditions for the DC OPF problem to compute $\nabla_{\bPsi}\bp^\mrg_\mrDC$.
\end{itemize}
Admittedly, computing the above sensitivities for each scenario per gradient step of solving~\eqref{eq:bilevel} is numerically daunting. To scale these computations (to some extent), we note that while we require the product $(\nabla_{\bp^\mrg_{\mrDC}}c(\cdot))^\top\nabla_{\bPsi}\bp^\mrg_{\mrDC}$, for instance, we do not necessarily require an explicit computation of $\nabla_{\bPsi}\bp^\mrg_{\mrDC}$. Capitalizing on this structure, Section~\ref{sec:SADCOPF} presents a technique to use directional derivatives that reduces computational costs significantly.

\section{MODELING}\label{sec:model}
Consider a single-phase equivalent model for a bulk power system represented as an undirected graph $(\mcN,\mcE)$. The nodes indexed as $n\in\mcN=\{1,\dots,N\}$ correspond to buses, and the edges $e\in\mcE$ correspond to transmission lines.  Assigning arbitrary directionality to edges, an edge $e\in\mcE$ can be denoted as $e=(m,n)$ if it runs from node $m$ to $n$. Denote the impedance and sending end active power flow for line $e$ by $r_e+\jmath x_e$ and $p^\mrf_e$, respectively. Let the complex voltage at bus $n\in\mcN$ be $v_n\angle\theta_n$. The network topology is captured by the $E\times N$ branch-bus incidence matrix $\bA$ with entries
	\begin{equation}\label{eq:A}
	A_{e,k}:=
	\begin{cases}
	+1&,~k=m\\
	-1&,~k=n\\
	0&,~\text{otherwise}
	\end{cases}~\forall~e=(m,n)\in\mcN.
	\end{equation}
Without loss of generality, we assume that all buses $n\in\mcN$ have (potentially zero) demands $p^\mrd_n+\jmath q_n^\mrd$, while the first $N_\mrg$ buses host generators; denote the set of generator buses as $\mcN_\mrg=\{1,\dots,N_\mrg\}\subset\mcN$. Partition the voltage-magnitude vector as $\bv=[(\bv^\mrg)^\top~(\bv^\mrd)^\top]^\top$, where $\bv^\mrg\in\mathbb{R}^{N_\mrg}$. Define matrix $\bF_\mrg=[\bI_{N_\mrg}~~\bzero]^\top$ that maps the generators to their respective buses, such that the nodal power injections are 
\begin{subequations}\label{eq:nodes}
    \begin{align}
        \bp&=\bF_\mrg \bp^\mrg-\bp^\mrd,\label{seq:nodeA}\\
        \bq&=\bF_\mrg \bq^\mrg-\bq^\mrd,
    \end{align}
\end{subequations}
where vector $\bp^\mrg+\jmath\bq^\mrg$ denotes power generation. Let the generation cost at bus $n$ be $c_n(p^\mrg_n)^2$, implying the total cost of generation being $(\bp^\mrg)^\top\bC\bp^\mrg$, where, $\bC=\diag(\{c_n\}_{n=1}^{N_\mrg})$. 
\subsection{DC-OPF Model}\label{sec:DCOPF}
The classical DC power flow approximation dictates
\begin{subequations}\label{eq:DCpf}
    \begin{align}
        \bp^\mrf&=(\diag(\bx))^{-1}\bA\btheta,\\
        \bp&=\bA^\top\bp^\mrf,
    \end{align}
\end{subequations}
where, $\bp^\mrf$ and $\bx$ are the vectors of line active power flows and reactances, respectively. To enhance the flexibility of the above model while retaining linearity, we augment the equations as~\cite{Taheri-Molzahn-TPEC24}
\begin{subequations}\label{eq:DCpf+}
    \begin{align}
        \bp^\mrf&=\bM\btheta+\bgamma,\\
        \bp&=\bA^\top\bp^\mrf+\bb,
    \end{align}
\end{subequations}
where, parameters $\bM\in\mathbb{R}^{E\times N},~\bgamma\in\mathbb{R}^E$, and $\bb\in\mathbb{R}^N$ are linearization coefficients that we will optimally determine, i.e., $\bPsi:=(\bM, \bgamma, \bb)$ in~\eqref{eq:bilevel}. With the modeling above, the DC OPF problem can be formulated as
\begin{subequations}\label{eq:P1}
\begin{align}
\min_{\bp^\mrg}~&~(\bp^\mrg)^\top\bC\bp^\mrg\qquad\qquad\textrm{(P1)}\notag\\
\textrm{s.to}~&~\eqref{seq:nodeA},~\eqref{eq:DCpf+},\\
~&~\bzero\leq\bp^\mrg\leq\bp_{\max}^\mrg,\label{seq:pg}\\
~&~|\bp^\mrf|\leq\bp_{\max}^\mrf,\label{seq:pl}
\end{align}
\end{subequations}
where, constraints \eqref{seq:pg} and \eqref{seq:pl} enforce the power generation and line flow limits. Note that, problem (P1) is parameterized by $\bPsi=(\bM, \bgamma, \bb)$ and describes a mapping from demand $\bp^\mrd$ to optimal generation $\bp^\mrg_{\mrDC}$, thus defining the mapping ${\verb|DCOPF|(\cdot)}$ in Fig.\ref{fig:overview}. Problem (P1) will therefore serve as the lower-level problem in solving~\eqref{eq:bilevel}. 

\subsection{AC Power Flow Model with Distributed Slack}\label{sec:ACPF}
We assume the following scheme of power system operation: \emph{Step-1)} System operator solves (P1) for a given demand $\bp^\mrd$; \emph{Step-2)} The obtained optimal dispatch $\bp^\mrg_\mrDC$ is used as active power setpoints for the generators. The voltage-magnitude references are set to $\bv^\mrg_\circ$; \emph{Step-3)} Based on the generator setpoints $(\bp^\mrg_\mrDC,\bv^\mrg_\circ)$, and demand $(\bp^\mrd,\bq^\mrd)$, the AC steady-state power system operating point is determined by the primary- and secondary-control schemes. The steady state quantities are denoted as $(\bar{\bp}^\mrg, \bar{\bq}^\mrg, \bar{\bv}^\mrd, \bar{\btheta}, \bar{\bp}^\mrf)$. Following a deterministic setting, we assume the demand $(\bp^\mrd,\bq^\mrd)$ is constant through the above steps. Conveniently, the system operating point determined by the above operating procedure can be accurately characterized by a system of nonlinear equations referred to as the distributed-slack bus formulation for AC power flow~\cite{Dhople20Slack}. 

The AC power flow equations are often expressed using the admittance matrix $\bY:=\bA^\top[\diag(\br+\jmath\bx)]^{-1}\bA$, where the real and imaginary parts can be separated as $\bY=\bG+\jmath\bB$. Given demand $(\bp^\mrd,\bq^\mrd)$ and generator setpoint $(\bp^\mrg_\mrDC,\bv^\mrg_\circ)$, the steady-state operating point satisfies 
\begin{subequations}\label{eq:ACPF}
\begin{align}
\bar{\bp}^\mrg&=\bp^\mrg_\mrDC + \balpha~\zeta\label{seq:ACPFa}\\
\bar{\bv}^\mrg&=\bv^\mrg_\circ\label{seq:ACPFb}\\
\bar{\bp}&=\bF_\mrg \bar{\bp}^\mrg-\bp^\mrd\label{seq:ACPFc}\\
\bar{\bq}&=\bF_\mrg \bar{\bq}^\mrg-\bq^\mrd\label{seq:ACPFd}\\
\bar{p}_n&=\bar{v}_n \sum_{k \in \mathcal{N}} \bar{v}_k ( G_{nk}\cos\bar{\theta}_{nk} + B_{nk}\sin\bar{\theta}_{nk}),~\forall n\label{seq:ACPFe}\\
\bar{q}_n&=\bar{v}_n \sum_{k \in \mathcal{N}} \bar{v}_k ( G_{nk}\sin\bar{\theta}_{nk} - B_{nk}\cos\bar{\theta}_{nk}),~\forall n\label{seq:ACPFf}\\
\bar{\theta}_1&=0,~\bar{\theta}_{nk}=\bar{\theta}_n-\bar{\theta}_k \forall (n,k)\in\mcE,\label{seq:ACPFg}
\end{align}
\end{subequations}
where, vector $\balpha\in[0,1]^{N_\mrg}$ contains the participation factors,
$$\balpha=\frac{\bp^\mrg_{\max}}{\bone^\top\bp^\mrg_{\max}},$$
and $\zeta$ is an unknown scalar that accounts for the active power losses. After one solves~\eqref{eq:ACPF}, the active power flows on line $e=(m,n)\in\mcE$ can be computed as 
\begin{equation}\label{eq:pl}
    \bar{p}^\mrf_e=G_{mn}\bar{v}_m^2-\bar{v}_m\bar{v}_n(G_{mn}\cos\bar{\theta}_{mn}+B_{mn}\sin\bar{\theta}_{mn}).
\end{equation}
\subsection{Optimal Linearization Objective}\label{sec:upper_obj}

We quantify the desirability of linearization coefficients $\bPsi$ based on the cost and feasibility of the ultimate steady state quantities $(\bar{\bp}^\mrg, \bar{\bq}^\mrg, \bar{\bv}^\mrd, \bar{\bp}^\mrf)$. Specifically, we measure optimality using the cost of steady-state generation $(\bar{\bp}^\mrg)^\top\bC\bar{\bp}^\mrg$, and use violations in active power generation and line flows (cf.~\eqref{seq:pg}-\eqref{seq:pl}) as the measure of infeasibility. The objective function in~\eqref{eq:bilevel} can thus be instantiated (for one scenario) as
\begin{equation}\label{eq:loss}
    \ell=(\bar{\bp}^\mrg)^\top\bC\bar{\bp}^\mrg+w\bone^\top\begin{bmatrix}
    \max(\bzero,\bar{\bp}^\mrg-\bp^\mrg_{\max})\\\max(\bzero,|\bar{\bp}^\mrf|-\bp^\mrf_{\max})
\end{bmatrix}.
\end{equation}
In practice, one could also penalize violations in load voltage magnitudes and generator reactive power injections. The penalty weights could be tuned differently for each constraint based on criticality. However, we proceed with the structural choice of~\eqref{eq:loss} for expositional ease without loss of generality. To recapitulate, determining the optimal PF linearization $\bPsi=(\bM,\bgamma,\bb)$ involves minimizing the cost function~\eqref{eq:loss} constrained by \eqref{eq:ACPF}-\eqref{eq:pl}, and the instances of the lower-level problem (P1).

\section{SENSITIVITY ANALYSIS}\label{sec:SA}
This section provides the sensitivity analysis for the blocks in Fig.~\ref{fig:overview} in reverse order. We first compute $\nabla_{\bar{\bp}^\mrg}\ell$ and $\nabla_{\bar{\bp}^\mrf}\ell$ from~\eqref{eq:loss}. Next, we will delineate the steps involved in computing $\nabla_{\bp^\mrg_\mrDC}\bar{\bp}^\mrg$ and $\nabla_{\bp^\mrg_\mrDC}\bar{\bp}^\mrf$ using the AC PF model of Section~\ref{sec:ACPF}. Finally, sensitivity analysis for the DC OPF (P1) will provide $\nabla_{M_{ij}}\bp^\mrg_\mrDC, \nabla_{\bgamma}\bp^\mrg_\mrDC$, and $\nabla_{\bb}\bp^\mrg_\mrDC$. At the outset, the loss function $\ell$ in~\eqref{eq:loss} is non-differentiable at the generation and flow limits because of the $\max$ operator. With some abuse of notation, we express the sub-gradients as
\begin{subequations}\label{eq:gradL}
\begin{align}
	(\nabla_{\bar{\bp}^\mrg}\ell)^\top&=2\bC\bar{\bp}^\mrg + w\mathds{1}(\bar{\bp}^\mrg\geq\bp^\mrg_{\max})\label{seq:gradLpg}\\
    (\nabla_{\bar{\bp}^\mrf}\ell)^\top&=w[\mathds{1}(\bar{\bp}^\mrf\geq\bp^\mrf_{\max})-\mathds{1}(\bar{\bp}^\mrf\leq-\bp^\mrf_{\max})],\label{seq:gradLpf}
\end{align}
\end{subequations}
where $\mathds{1}(\cdot)$ is the indicator function that applies entry-wise.

\subsection{Sensitivity Analysis for AC Power Flow with Distributed Slack Bus Formulation}\label{sec:SCACPF}
Building on~\eqref{eq:gradL} and using total derivatives, we aim at computing
\begin{equation}\label{eq:gradLpgdc}
\nabla_{\bp^\mrg_\mrDC}\ell=\nabla_{\bar{\bp}^\mrg}\ell\nabla_{\bp^\mrg_\mrDC}\bar{\bp}^\mrg+\nabla_{\bar{\bp}^\mrf}\ell\nabla_{\bp^\mrg_\mrDC}\bar{\bp}^\mrf.
\end{equation}
Thus, we next derive the sensitivity of $(\bar{\bp}^\mrg,\bar{\bp}^\mrf)$ with respect to $\bp^\mrg_\mrDC$ using~\eqref{eq:ACPF}-\eqref{eq:pl}. Note that, $(\bar{\bp}^\mrg,\bar{\bp}^\mrf)$ are explicit functions of $(\zeta,\bar{\bv},\bar{\btheta})$; cf.~\eqref{seq:ACPFa}, \eqref{eq:pl}. Hence, we focus on computing sensitivities of $(\zeta,\bar{\bv},\btheta)$ with respect to $\bp^\mrg_\mrDC$. Additionally, since $\theta_1=0$ and $\bar{\bv}^\mrg=\bv_\circ^\mrg$ are constants, the sought sensitivities are limited to $\nabla_{\bp^{\mrg}_{\mrDC}}\zeta$, $\nabla_{\bp^{\mrg}_{\mrDC}} \bar{\theta}_k$ for $k=2,\dots,N$, and $\nabla_{\bp^{\mrg}_{\mrDC}} \bar{\bv}^\mrd$. Let $\Check{\btheta}:=\{\bar{\theta}_n\}_{n=2}^N$.

Substituting~\eqref{seq:ACPFa}-\eqref{seq:ACPFd} and \eqref{seq:ACPFg} in \eqref{seq:ACPFe}-\eqref{seq:ACPFf}, we get $2N$ equations in $(\zeta,\bar{\bv}^\mrd,\bar{\bq}^\mrg, \Check{\btheta})$. Since in this work, we do not require computing $\bar{\bq}^\mrg$, we can drop the corresponding equations from~\eqref{seq:ACPFf} to finally obtain $2N-N_\mrg$ equations in $2N-N_\mrg$ unknowns. Let us denote these equations as $\{f_n\}_{n=1}^{2N-N_{\mrg}}$. One can use any off-the-shelf non-linear solver for this system of equations. In our numerical tests, we use the MATLAB function ${\verb|fsolve|}$ to find the solution, and compute the Jacobian,
$$\bJ=\begin{bmatrix}
    \frac{\partial f_1}{\partial\zeta}&\frac{\partial f_1}{\partial\bar{v}_{N_\mrg+1}}&...&\frac{\partial f_1}{\partial \bar{v}_N} &\frac{\partial f_1}{\partial\theta_2}&...&\frac{\partial f_1}{\partial\theta_N}\\
    :&:&...&:&:&...&:\\
    \frac{\partial f_{2N-N_{\mrg}}}{\partial\zeta}&...&...&... &...&...&\frac{\partial f_{2N-N_{\mrg}}}{\partial\theta_N}
\end{bmatrix}$$
For an infinitesimal change $\bp^\mrg_\mrDC\leftarrow\bp^\mrg_\mrDC+\bdelta_{\bp^{\mrg}_\mrDC}$, the corresponding change in solutions of $\{f_n\}_{n=1}^{2N-N_{\mrg}}$ can be found by solving
\begin{equation}\label{eq:SAACPF}
\bJ\begin{bmatrix}
    \delta_{\zeta}\\\bdelta_{\bar{\bv}}\\\bdelta_{\bar{\btheta}}
\end{bmatrix}=\begin{bmatrix}
    \bF_\mrg\\\bzero
\end{bmatrix}\bdelta_{\bp^{\mrg}_\mrDC}\implies \begin{bmatrix}
    \nabla_{\bp^{\mrg}_{\mrDC}}\zeta\\
    \nabla_{\bp^{\mrg}_{\mrDC}}\bar{\bv}^\mrd\\
    \nabla_{\bp^{\mrg}_{\mrDC}}\Check{\btheta}
\end{bmatrix}=\bJ^{-1}\begin{bmatrix}
    \bF_\mrg\\\bzero
\end{bmatrix}.\end{equation}
Having obtained the gradients in~\eqref{eq:SAACPF}, one can readily evaluate~\eqref{eq:gradLpgdc} by observing
\begin{subequations}\label{eq:total}
    \begin{align}
\nabla_{\bp^\mrg_\mrDC}\bar{\bp}^\mrg&=\bI+\balpha\nabla_{\bp^{\mrg}_{\mrDC}}\zeta\label{seq:totala}\\
\nabla_{\bp^\mrg_\mrDC}\bar{\bp}^\mrf&=\nabla_{\bar{\bv}^\mrd}\bar{\bp}^\mrf\nabla_{\bp^{\mrg}_{\mrDC}}\bar{\bv}^\mrd+\nabla_{\Check{\btheta}}\bar{\bp}^\mrf\nabla_{\bp^{\mrg}_{\mrDC}}\Check{\btheta},\label{seq:totalb}
\end{align}
\end{subequations}
where \eqref{seq:totala} stems from~\eqref{seq:ACPFa}, and $(\nabla_{\bar{\bv}^\mrd}\bar{\bp}^\mrf, \nabla_{\Check{\btheta}}\bar{\bp}^\mrf)$ can be obtained from~\eqref{eq:pl}.
\subsection{Sensitivity Analysis for DC OPF}\label{sec:SADCOPF}
Sensitivity analysis of the DC OPF in (P1) entails computing how the minimizer $\bp^\mrg_\mrDC$ changes with infinitesimal change in linearization parameters $(\bM, \bgamma, \bb)$, i.e., $\nabla_{M_{ij}} \bp^\mrg_\mrDC$, $\nabla_{\bgamma} \bp^\mrg_\mrDC$, and $\nabla_{\bb} \bp^\mrg_\mrDC$. To unclutter the exposition, consider the following abstraction of the quadratic program
\begin{subequations}
\begin{align}
\bchi^\star=\arg\min_{\bchi}~&\bchi^\top\bP\bchi~\qquad(\textrm{P2})\notag\\
\textrm{s.to}~&~\bW\bchi+\bu=\bR\bd\label{seq:QPa}~:~\blambda\\
~&~\bS\bchi+\bT\bd\leq\bu~:~\bmu,\label{seq:QPb}
\end{align}
\end{subequations}
where $\blambda$ and $\bmu$ are the Lagrange multipliers corresponding to \eqref{seq:QPa}-\eqref{seq:QPb}, and matrix $\bP$ is symmetric positive definite. Problem (P1) is an instance of (P2), i.e., one can define the quantities $(\bP,\bW,\bS,\bT,\bchi,\bd,\bu)$ such that (P2) coincides with (P1). The sensitivity analysis goal thus translates to computing $\nabla_{W_{ij}} \bchi^\star,~\forall i, j,$ and $\nabla_{\bu} \bchi^\star$. These sensitivities are known to exist and can be readily computed under certain standard technical conditions stated next~\cite{fiacco1976sensitivity,Conejo06,L2O2021}.
\begin{assumption}\label{as:1}
   Given a tuple of optimal primal/dual variables $(\bchi^\star,\blambda^\star,\bmu^\star)$, the $n^{th}$ constraint in~\eqref{seq:QPb} is active, i.e., $\be_n^\top(\bS\bchi^\star+\bT\bd-\bu)=0$, if and only if $\mu_n>0$. Denote the set of active constraints as $\mcA$. The rows of $\bW$ and vectors $\{\be_n^\top\bS\}_{n\in\mcA}$ are linearly independent.
\end{assumption}
\begin{assumption}\label{as:2}
    Denote the Lagrangian function of (P2) by $L(\bchi,\blambda,\bmu)$. For a subspace orthogonal to the subspace spanned by the gradients of active constraints 
\[\mcZ:=\left\{\bz: \bW\bz=\bzero,~\be_n^\top\bS\bz=0~\forall n\in\mcA\right\}\]
it holds that $\bz^\top\nabla_{\bchi\bchi}^2L\bz>0$ for all $z\in\mcZ\setminus\{\bzero\}$.
\end{assumption}
Under the aforementioned conditions, the following result forms the basis of our sensitivity analysis.
\begin{proposition}[\cite{fiacco1976sensitivity}\cite{L2O2021}]\label{prop:1} Let $(\bchi^\star,\blambda^\star,\bmu^\star)$ denote the optimal primal and dual variables of (P2). Consider an infinitesimal perturbation in problem parameters $\bW\leftarrow\bW+\bdelta_{\bW}$ and $\bu\leftarrow\bu+\bdelta_{\bu}$ and a corresponding change in optimal solution $(\bchi^\star+\bdelta_{\bchi},\blambda^\star+\bdelta_{\blambda},\bmu^\star+\delta_{\bmu})$. Under Assumptions~\ref{as:1} and \ref{as:2}, the perturbations satisfy 
\begin{equation}\label{eq:prop1}
    \bGamma\begin{bmatrix}
        \bdelta_{\bchi}\\
        \bdelta_{\blambda}\\
        \bdelta_{\bmu}
    \end{bmatrix}=-\begin{bmatrix}
        \bdelta_{\bW}^\top\blambda^\star\\
        \bdelta_{\bW}\bchi^\star+\bdelta_{\bu}\\
        -\diag(\bmu^\star)\bdelta_{\bu}
    \end{bmatrix}
\end{equation}
    where,
    $$\bGamma:=\begin{bmatrix}
        2\bP & \bW^\top & \bS^\top\\
        \bW & \bzero & \bzero\\
        \diag(\bmu^\star)\bS & 0 & \diag(\bS\bchi^\star+\bT\bd-\bu)
    \end{bmatrix},$$
    and $\bGamma^{-1}$ exists.
\end{proposition}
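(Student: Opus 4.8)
The plan is to recognize Proposition~\ref{prop:1} as the standard first-order sensitivity result for a parametric quadratic program and to establish it in two movements: first, deriving the linear system~\eqref{eq:prop1} by total differentiation of the KKT system of (P2); and second, certifying that the coefficient matrix $\bGamma$ is nonsingular, which both legitimizes the differentiation (via the implicit function theorem) and yields the claimed existence of $\bGamma^{-1}$.

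For the first movement, I would write the KKT conditions of (P2): stationarity $2\bP\bchi^\star+\bW^\top\blambda^\star+\bS^\top\bmu^\star=\bzero$, equality feasibility $\bW\bchi^\star+\bu-\bR\bd=\bzero$, and complementary slackness in product form $\diag(\bmu^\star)(\bS\bchi^\star+\bT\bd-\bu)=\bzero$. The remaining primal/dual feasibility conditions hold strictly on the respective inactive components by Assumption~\ref{as:1} and therefore persist under infinitesimal perturbation, so they contribute no binding equations. Substituting $\bW\leftarrow\bW+\bdelta_{\bW}$, $\bu\leftarrow\bu+\bdelta_{\bu}$ and the induced $(\bchi^\star+\bdelta_{\bchi},\blambda^\star+\bdelta_{\blambda},\bmu^\star+\bdelta_{\bmu})$ into these three equalities and keeping only first-order terms, each equality contributes one block row of~\eqref{eq:prop1}: stationarity gives $2\bP\bdelta_{\bchi}+\bW^\top\bdelta_{\blambda}+\bS^\top\bdelta_{\bmu}=-\bdelta_{\bW}^\top\blambda^\star$; equality feasibility gives $\bW\bdelta_{\bchi}=-(\bdelta_{\bW}\bchi^\star+\bdelta_{\bu})$; and differentiating the product form of complementary slackness gives $\diag(\bmu^\star)\bS\bdelta_{\bchi}+\diag(\bS\bchi^\star+\bT\bd-\bu)\bdelta_{\bmu}=\diag(\bmu^\star)\bdelta_{\bu}$. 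Stacking these three recovers the displayed relation exactly.

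The crux, and the step I expect to be the main obstacle, is verifying that $\bGamma$ is invertible. I would argue that its null space is trivial: suppose $\bGamma[\bx^\top~\by^\top~\bz^\top]^\top=\bzero$. Reading the third block row componentwise with $g_n:=\be_n^\top(\bS\bchi^\star+\bT\bd-\bu)$ gives $\mu_n^\star\,\be_n^\top\bS\bx+g_n z_n=0$ for each $n$; strict complementarity (Assumption~\ref{as:1}) then forces $\be_n^\top\bS\bx=0$ on the active set $\mcA$ (where $\mu_n^\star>0,~g_n=0$) and $z_n=0$ off it (where $g_n\neq0$). Combined with the second block row $\bW\bx=\bzero$, this places $\bx\in\mcZ$. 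Left-multiplying the first block row by $\bx^\top$ and using $\bW\bx=\bzero$ together with $(\bS\bx)^\top\bz=0$ (the latter because $\bz$ is supported on $\mcA$, where $\bS\bx$ vanishes) leaves $2\bx^\top\bP\bx=0$; since $\bP$ is positive definite (equivalently, invoking $\bx^\top\nabla_{\bchi\bchi}^2 L\,\bx>0$ on $\mcZ\setminus\{\bzero\}$ from Assumption~\ref{as:2}), this forces $\bx=\bzero$.

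With $\bx=\bzero$, the first block row reduces to $\bW^\top\by+\bS^\top\bz=\bzero$, and since $\bz$ vanishes off $\mcA$ this reads $\bW^\top\by+\sum_{n\in\mcA}z_n(\be_n^\top\bS)^\top=\bzero$, a vanishing linear combination of the rows of $\bW$ and the active gradients $\{\be_n^\top\bS\}_{n\in\mcA}$. Their linear independence, assumed in Assumption~\ref{as:1}, forces $\by=\bzero$ and $z_n=0$ for all $n\in\mcA$, hence $\bz=\bzero$. Thus $\bGamma$ has trivial null space and, being square, is invertible, so $\bGamma^{-1}$ exists and the implicit function theorem guarantees a locally unique differentiable solution map; this completes the argument. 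The only delicate bookkeeping is confirming that inactive constraints and nonnegative multipliers stay feasible under small perturbations, which is precisely where strict complementarity is used and the point I would treat most carefully.
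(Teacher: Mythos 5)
Your proof is correct, and it is worth noting that the paper itself does not prove Proposition~\ref{prop:1} at all: it is stated by citation, with the remark that it follows (with the linear-independence requirement relaxed) as a special case of \cite[Theorem 1]{L2O2021}. What you have written is the classical Fiacco-style argument from \cite{fiacco1976sensitivity}, and each step checks out. The linearization of stationarity, equality feasibility, and product-form complementarity reproduces \eqref{eq:prop1} exactly (including the sign bookkeeping $\diag(\bmu^\star)\bS\bdelta_{\bchi}+\diag(\bS\bchi^\star+\bT\bd-\bu)\bdelta_{\bmu}=\diag(\bmu^\star)\bdelta_{\bu}$, which matches the third block of the right-hand side). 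Your null-space argument for $\bGamma$ is also sound: strict complementarity splits the third block row into $\be_n^\top\bS\bx=0$ on $\mcA$ and $z_n=0$ off $\mcA$; the cancellation $(\bS\bx)^\top\bz=0$ is valid precisely because the supports are complementary; $2\bx^\top\bP\bx=0$ then forces $\bx=\bzero$ either via $\bP\succ 0$ directly or via Assumption~\ref{as:2} (noting $\nabla^2_{\bchi\bchi}L=2\bP$ and that you have placed $\bx\in\mcZ$); and the linear independence in Assumption~\ref{as:1} eliminates $\by$ and $\bz$. You also correctly isolate the point most write-ups elide, namely that strict complementarity is what guarantees the zero of the smooth perturbed KKT system remains a genuine KKT point of the perturbed QP, so the implicit function theorem applies legitimately. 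The one substantive difference from the paper's cited route is scope: your argument uses the linear-independence hypothesis essentially --- if the rows of $\bW$ and $\{\be_n^\top\bS\}_{n\in\mcA}$ are dependent, then taking $\bx=\bzero$ with the corresponding nontrivial $(\by,\bz)$ supported on $\mcA$ exhibits a null vector of $\bGamma$, so invertibility genuinely fails --- whereas \cite[Theorem 1]{L2O2021} dispenses with that hypothesis (only the primal sensitivity remains well defined in the degenerate case). Your proof therefore establishes the proposition exactly as stated, under the stated assumptions, but is strictly narrower than the generalization the paper points to.
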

    The proof of Proposition~\ref{prop:1} can be established (and the linear independence requirement in Assumption~\ref{as:1} relaxed) as a special case of \cite[Theorem 1]{L2O2021}.

Proposition~\ref{prop:1} can be used to evaluate the desired sensitivities $\nabla_{W_{ij}} \bchi^\star$ and $\nabla_{\bu} \bchi^\star$ at the computational cost of solving the linear system~\eqref{eq:prop1}. Specifically, $\nabla_{W_{ij}} \bchi^\star$ can be obtained by solving for $\bdelta_{\bchi}$ in~\eqref{eq:prop1} while setting $\bdelta_{\bW}=\be_i\be_j^\top$ and $\bdelta_{\bu}=\bzero$ on the right hand side. Computing $\nabla_{\bu} \bchi^\star$ is more direct: it is given by the top rows (corresponding to the length of $\bchi$) of $\bGamma^{-1}[\bzero~~-\bI~~ \diag(\bmu^\star)]^\top$. It is apparent that the aforementioned approach of computing $\nabla_{W_{ij}} \bchi^\star$ individually for each $i, j$ is computationally expensive. A useful observation stems from noting that we do not necessarily need $\nabla_{W_{ij}} \bchi^\star$ directly. Rather, we are after computing $\nabla_{\bW}\ell$ when we have already computed $\nabla_{\bchi^\star}\ell$ by combining \eqref{eq:gradL}, \eqref{eq:gradLpgdc}, and \eqref{eq:total}. Capitalizing on this structure, the next result (obtained on the lines of \cite[Theorem 1]{zeng24aL4DC}) helps reduce the computational costs significantly.

\begin{proposition}\label{prop:2}
    Given $\nabla_{\bchi^\star}\ell$, let $\bphi^\top:=-\begin{bmatrix}
        \nabla_{\bchi^\star}\ell&\bzero^\top&\bzero^\top
    \end{bmatrix}\Gamma^{-1}$. Partition the vector as $\bphi^\top=[\bphi_{\bchi}^\top~~\bphi_{\blambda}^\top~~\bphi_{\bmu}^\top]$, such that $(\bphi_{\bchi},~\bphi_{\blambda},~\bphi_{\bmu})$ have dimensions of $(\bchi,\blambda,\bmu)$. Then,
    \begin{equation}\label{eq:prop2}
        \nabla_{\bW}\ell=\blambda^\star\bphi_{\bchi}^\top+\bphi_{\blambda}(\bchi^\star)^\top
    \end{equation}
\end{proposition}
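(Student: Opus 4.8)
The plan is to use the adjoint (vector--Jacobian-product) trick so that $\nabla_{\bW}\ell$ is extracted from a single solve of the linear system in Proposition~\ref{prop:1}, rather than one solve per entry $W_{ij}$. Since only $\bW$ is perturbed here, I would first specialize Proposition~\ref{prop:1} to $\bdelta_{\bu}=\bzero$, so that the perturbation of the optimal triple obeys
\begin{equation*}
\begin{bmatrix}\bdelta_{\bchi}\\\bdelta_{\blambda}\\\bdelta_{\bmu}\end{bmatrix}=-\bGamma^{-1}\begin{bmatrix}\bdelta_{\bW}^\top\blambda^\star\\\bdelta_{\bW}\bchi^\star\\\bzero\end{bmatrix}.
\end{equation*}

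Next, since $\ell$ depends on $\bW$ only through $\bchi^\star$, the first-order change in the loss is $d\ell=\nabla_{\bchi^\star}\ell\,\bdelta_{\bchi}$. I would pad this with zeros to match the block structure of the system above and then substitute, so that the inverse $\bGamma^{-1}$ acts to the left on the (known) covector $[\nabla_{\bchi^\star}\ell~~\bzero^\top~~\bzero^\top]$:
\begin{equation*}
d\ell=\begin{bmatrix}\nabla_{\bchi^\star}\ell&\bzero^\top&\bzero^\top\end{bmatrix}\begin{bmatrix}\bdelta_{\bchi}\\\bdelta_{\blambda}\\\bdelta_{\bmu}\end{bmatrix}=\bphi^\top\begin{bmatrix}\bdelta_{\bW}^\top\blambda^\star\\\bdelta_{\bW}\bchi^\star\\\bzero\end{bmatrix},
\end{equation*}
where $\bphi^\top:=-[\nabla_{\bchi^\star}\ell~~\bzero^\top~~\bzero^\top]\bGamma^{-1}$ is exactly the adjoint vector defined in the statement. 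Partitioning $\bphi$ conformally with $(\bchi,\blambda,\bmu)$ and discarding the block multiplying $\bzero$ gives the bilinear expression $d\ell=\bphi_{\bchi}^\top\bdelta_{\bW}^\top\blambda^\star+\bphi_{\blambda}^\top\bdelta_{\bW}\bchi^\star$.

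The final step is matrix bookkeeping: I would rewrite each scalar term as a Frobenius inner product with $\bdelta_{\bW}$ via the cyclic property of the trace, using $\trace(\bA\bB^\top)=\trace(\bA^\top\bB)=\langle\bA,\bB\rangle$. Concretely, $\bphi_{\bchi}^\top\bdelta_{\bW}^\top\blambda^\star=\trace(\blambda^\star\bphi_{\bchi}^\top\bdelta_{\bW}^\top)=\langle\blambda^\star\bphi_{\bchi}^\top,\bdelta_{\bW}\rangle$ and $\bphi_{\blambda}^\top\bdelta_{\bW}\bchi^\star=\trace(\bchi^\star\bphi_{\blambda}^\top\bdelta_{\bW})=\langle\bphi_{\blambda}(\bchi^\star)^\top,\bdelta_{\bW}\rangle$. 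Since $d\ell=\langle\nabla_{\bW}\ell,\bdelta_{\bW}\rangle$ must hold for an arbitrary perturbation $\bdelta_{\bW}$, matching coefficients yields $\nabla_{\bW}\ell=\blambda^\star\bphi_{\bchi}^\top+\bphi_{\blambda}(\bchi^\star)^\top$, as claimed.

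The main obstacle is not conceptual but lies in aligning the two trace identities with the paper's numerator-layout convention so the transposes land correctly: one term arrives carrying $\bdelta_{\bW}^\top$ and the other $\bdelta_{\bW}$, and only after the cyclic rearrangement do both pair against $\bdelta_{\bW}$ itself, which is precisely what forces the asymmetric outer-product pattern $\blambda^\star\bphi_{\bchi}^\top$ versus $\bphi_{\blambda}(\bchi^\star)^\top$. A secondary point worth stating explicitly is that $\bphi$ costs only a single linear solve with $\bGamma$ (equivalently, $\bGamma^\top\bphi=-[\nabla_{\bchi^\star}\ell~~\bzero^\top~~\bzero^\top]^\top$), independent of the number of entries of $\bW$, which is the computational saving the proposition is designed to deliver.
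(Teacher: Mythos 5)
Your proof is correct and takes essentially the same route as the paper: both specialize Proposition~\ref{prop:1} to $\bdelta_{\bu}=\bzero$ and apply $\bGamma^{-1}$ on the left to the padded covector $\begin{bmatrix}\nabla_{\bchi^\star}\ell&\bzero^\top&\bzero^\top\end{bmatrix}$, so that the single adjoint vector $\bphi$ (one linear solve, independent of the number of entries of $\bW$) carries all the information. The only difference is bookkeeping at the final step: the paper sets $\bdelta_{\bW}=\be_i\be_j^\top$ and assembles $\nabla_{W_{ij}}\ell$ entrywise into the outer-product form, whereas you keep $\bdelta_{\bW}$ arbitrary and identify $\nabla_{\bW}\ell$ via Frobenius-duality trace identities --- the two computations coincide term by term.
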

\begin{proof}
    Given $\nabla_{\bchi^\star}\ell$, we have $\nabla_{W_{ij}}\ell=\nabla_{\bchi^\star}\ell\nabla_{W_{ij}}\bchi^\star$. To compute $\nabla_{W_{ij}}\bchi^\star$ we set $\bdelta_{\bu}=\bzero$ and $\bdelta_{\bW}=\be_i\be_j^\top$ in~\eqref{eq:prop1} to obtain
    $$\nabla_{W_{ij}}\bchi^\star=-\begin{bmatrix}
        \bI&\bzero&\bzero
    \end{bmatrix}\Gamma^{-1}\begin{bmatrix}
        (\be_i\be_j^\top)^\top\blambda^\star\\
        \be_i\be_j^\top\bchi^\star\\
        \bzero
    \end{bmatrix}.$$
    Therefore,
    \begin{align}
        \nabla_{W_{ij}}\ell&=-\nabla_{\bchi^\star}\ell\begin{bmatrix}
        \bI&\bzero&\bzero
    \end{bmatrix}\Gamma^{-1}\begin{bmatrix}
        (\be_i\be_j^\top)^\top\blambda^\star\\
        \be_i\be_j^\top\bchi^\star\\
        \bzero
    \end{bmatrix}\notag\\
    &=-\begin{bmatrix}
        \nabla_{\bchi^\star}\ell&\bzero^\top&\bzero^\top
    \end{bmatrix}\Gamma^{-1}\begin{bmatrix}
        \be_j\be_i^\top\blambda^\star\\
        \be_i\be_j^\top\bchi^\star\\
        \bzero
    \end{bmatrix}\notag\\
    &=\bphi^\top\begin{bmatrix}
        \be_j\lambda_i^\star\\
        \be_i\chi_j^\star\label{eq:prop2a}\\
        \bzero
    \end{bmatrix},
    \end{align}
    Using the partitions of $\bphi$, one can rewrite~\eqref{eq:prop2a} as
    \begin{align*}
    \nabla_{W_{ij}}\ell&=\bphi_{\bchi}^\top\be_j\lambda_i^\star+\bphi_{\blambda}^\top\be_i\chi_j^\star\\
    &=\be_i^\top(\blambda^\star\bphi_{\bchi}^\top+\bphi_{\blambda}(\bchi^\star)^\top)\be_j
    \end{align*}
    Putting the gradients for all $i, j$, together yields~\eqref{eq:prop2}.
\end{proof}

With the overall sensitivity analysis structure in place, we use the mini-batch stochastic gradient descent approach summarized in Algorithm~\ref{algo} to obtain optimal PF linearizations.

\begin{algorithm}
\caption{Mini-batch stochastic gradient descent for optimal PF linearization}
\label{algo}
\begin{algorithmic}[1]

    \State \textbf{Given:} scenario set $\{(\mathbf{p}^\mrd_s,\mathbf{q}^\mrd_s)\}_{s=1}^S$, batch size $B$, and max iterations $T$
    \State \textbf{Initialize:} $\bM\leftarrow(\diag(\bx))^{-1}\bA$, $\bgamma\leftarrow\bzero,\bb\leftarrow\bzero$, $t \leftarrow 1$, and $\alpha$.

    \While{$t \le T$}
        \State Set $\alpha \leftarrow \alpha(T-t)/T$
        \State \textbf{Initialize:} 
        $\ell\leftarrow 0, \Delta_{\bM} \leftarrow \bzero,\; \Delta_{\bgamma} \leftarrow \bzero,\; \Delta_{\bb}  \leftarrow \bzero$
        \State Sample mini-batch $\mcB \subseteq \{1,\ldots,S\}$ of size $B$

        \For{each $s \in \mcB$}
            \State Solve (P1) to get $\bp^\mrg_{\mrDC,s}.$  
        
            \State Solve distributed slack based AC PF~\eqref{eq:ACPF}-\eqref{eq:pl} to get $(\bar{\bp}^\mrf_s,\bar{\bp}^\mrg_s)$. 
            \State Compute and accumulate the loss using~\eqref{eq:loss}
            $$\ell\leftarrow \ell+\ell_s(\overline{\mathbf{p}}^g_{s},\overline{\mathbf{p}}^\mrf_{s})$$
            \State Compute and accumulate gradients using~\eqref{eq:gradL}-\eqref{eq:total}, and \eqref{eq:prop1}-\eqref{eq:prop2}
            $$\Delta_\bM\leftarrow \Delta_\bM+\nabla_\bM\ell_s$$
            $$\Delta_{\bgamma}\leftarrow \Delta_{\bgamma}+\nabla_{\bgamma}\ell_s,~\Delta_{\br}\leftarrow \Delta_{\br}+\nabla_{\br}\ell_s$$
        \EndFor

        \State Update Linearization Coefficient:
        $$\bM \leftarrow \bM - \frac{\alpha}{B}\Delta_{\bM},~~
          \bgamma \leftarrow \bgamma -  \frac{\alpha}{B}\,\Delta_{\bgamma},~~
          \bb \leftarrow \bb -  \frac{\alpha}{B}\Delta_{\bb}$$
        \State $t \leftarrow t + 1$
    \EndWhile

\end{algorithmic}
\end{algorithm}


\section{NUMERICAL TESTS}\label{sec:tests}
The performance of the developed PF linearization approach was evaluated using the IEEE 39-bus system. Network parameters, generation limits, and nominal demands were sourced from the MATPOWER ${\verb|casefile|}$~\cite{MATPOWER}. When needed for benchmarking, MATPOWER was used to solve AC OPF instances. Traditional DC OPF instances that involve solving~(P1) with constraints~\eqref{eq:DCpf+} replaced by the classical DC PF model~\eqref{eq:DCpf} were also solved using MATPOWER. For given linearization coefficients $(\bM, \bgamma, \bb)$, the quadratic program~(P1) was solved using the MATLAB-based optimization toolbox CVX and Gurobi. Demand scenarios for obtaining optimal PF linearization and benchmarking performance were generated by scaling the nominal demand at each node of the IEEE 39-bus. The scaling factors were drawn independently from a uniform distribution $\mcU[0.9,1.1]$. A dataset $\{\bp^\mrd_s,\bq^\mrd_s\}_{s=1}^S$, with $S=64$ was used to solve~\eqref{eq:bilevel} via Algorithm~\ref{algo}, with a batch size of 8. A set of 1000 random instances drawn as described above was used for performance evaluation for the tests described next.
   \begin{figure}[t]
      \centering
      \includegraphics[scale=1]{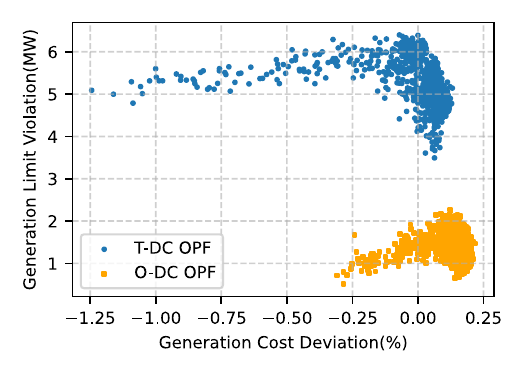}
      \vspace{-3em}
      \caption{Average violation (per test instance) in power generation limits vs increased generation cost as calculated in~\eqref{eq:IC} for the steady-state operating points resulting from traditional and optimized DC OPF.}
      \label{fig:result1}
   \end{figure}

The first set of tests was conducted to assess the cost and feasibility of steady-state operation resulting from the use of traditional DC OPF (T-DC OPF) vs the proposed optimized DC OPF (O-DC OPF). To benchmark cost-optimality, AC OPF was solved for the test scenarios. The percentage increase in the cost of operation for the $k$-th test instance was then evaluated as
\begin{equation}\label{eq:IC}
    \frac{ (\bar{\bp}_k^\mrg-\bp^{\mrg}_{\mrAC,k})^\top\bC(\bar{\bp}_k^\mrg-\bp^{\mrg}_{\mrAC,k})}{(\bp^{\mrg}_{\mrAC,k})^\top\bC\bp^{\mrg}_{\mrAC,k}}\times 100
\end{equation}
Further, the violation in generator active power constraint~\eqref{seq:pg} and line flow limits in~\eqref{seq:pl} were computed by averaging the violations in MW over the count of constraint violations. Figure~\ref{fig:result1} shows the distribution of operating points on the cost-infeasibility plane resulting from T-DC OPF and O-DC OPF when the weight in~\eqref{eq:loss} was set to $w=10$. The obtained distribution shows that the proposed approach significantly decreases constraint violations at a marginal increase in operating cost. Interestingly, both the traditional and optimized DC OPF often result in lower operating costs than the AC OPF. However, such cost reduction stems from unacceptable generation and line limits violations. In practice, these operating scenarios would require additional intervention from grid operators to restore operational feasibility. The importance of reduced violations from O-DC OPF is further accentuated in such cases.

\begin{figure}[t]
      \centering
      \includegraphics[scale=1]{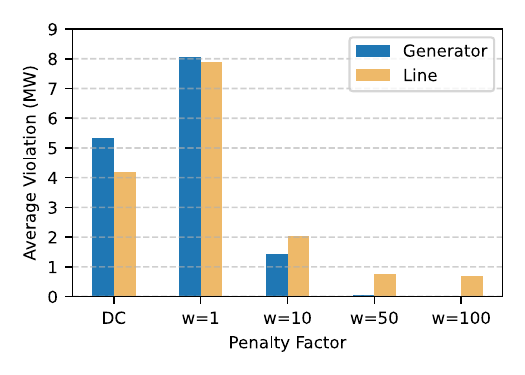}
      \vspace{-3em}
      \caption{Average violation (over all test instances) in generation and line limits for the steady-state operating points resulting from traditional DC OPF and the optimized DC OPF for varying weight parameter $w$.}
      \label{fig:result2}
   \end{figure}

In pursuit of reduced violations, a second set of tests was conducted to study the role of the weight parameter $w$ in traversing the cost-infeasibility trade-off. Figure~\ref{fig:result2} shows that parameter $w$ can indeed be used as a tuning parameter to steer violations to acceptable levels. However, there was some cost for avoiding the violations. The average percentage cost increase~\eqref{eq:IC} over all test instances when using T-DC OPF was found to be $-0.03\%$; the percentage increase for O-DC OPF was $\{-0.21, 0.11, 0.24, 0.37\}$ for $w=\{1,10,50,100\}$, respectively. Figure~\ref{fig:result2} shows that minimal to no violations in generation limits occurred for $w$ greater than 50. Further, it was found that by using the optimal PF linearization coefficients obtained for aggressive weight $w=1000$, the line limit violations were also completely eliminated with an average cost increase of $0.55 \%$. Thus, the proposed approach can be used to avoid the need for operator interventions to restore operational feasibility. 
\section{CONCLUSIONS}
This work has developed a novel application-informed approach for approximating power flow constraints in OPF formulations. The proposed idea is markedly distinct from the conventional practice of assessing PF linearization quality based on inaccuracy with respect to AC PF equations, while being agnostic to the end use. Taking an end-to-end approach, the impact of linearized PF constraints on the DC OPF decisions and subsequent effect on the resulting steady-state grid operating point has been formally characterized. The consequent task of optimizing PF linearization coefficients constitutes a bilevel optimization problem, which is solved using a mini-batch stochastic gradient descent algorithm. A backpropagation-suited sensitivity analysis is carried out for various subsystems to obtain the required gradients. Numerical tests on the IEEE 39-bus system have demonstrated the flexibility harnessed from the proposed approach in traversing the trade-off between cost-optimality and operational feasibility. Specifically, constraint violations, which are a major concern when approximating AC OPF by DC OPF, can be largely avoided at a marginal increase in generation cost. It is worth emphasizing that the developed approach does not increase the complexity of solving DC OPF. Instead, the numerical values of problem parameters are altered while retaining the convex quadratic program structure of traditional DC OPF formulations. Acknowledging the high computational complexity of solving bilevel optimization problems, our future work focuses on algorithmic advancements targeted at the scalability of the developed approach for large-scale systems. One promising direction to achieve scalability is to use the multi-parametric programming approach for sensitivity analysis of quadratic programs as in~\cite{SGKCB2020}. We will also explore ways to adapt our method to larger uncertainties in demand input. Further, we will extend our approach to cater to additional impactful applications in power system operation, planning, and markets.



\bibliography{myabrv,linearization}
\bibliographystyle{IEEEtran}

\end{document}